\theoremstyle{plain}
\newtheorem{thm}{Theorem}
\theoremstyle{definition}
\newcommand{\bea}{\begin{eqnarray}}
\newcommand{\eea}{\end{eqnarray}}
\newcommand{\nn}{\nonumber}
\begin{document}

\title{Partial Chromatic Polynomials and Diagonally Distinct Sudoku Squares} 
\author{F\"{u}sun Akman\\ {\small {\it Illinois State University, Normal, IL}}\\ {\small akmanf@ilstu.edu}}

\maketitle 

\section{Introduction}

This paper is based on a talk I gave at Illinois State University on April 10, 2008, and contains two proofs. The first one is of a statement about completions of partial $\lambda$-colorings of a graph in a very interesting article by Herzberg and Murty~\cite{HM}, namely, the fact that the number of possible completions is a polynomial in $\lambda$ (which we will call the {\it partial chromatic polynomial}). Two elegant proofs of this statement, one with M\"{o}bius inversion and the other by induction, are already given in~\cite{HM}: both proofs use the concept of contraction. Our alternative proof mimics the construction of the classical chromatic polynomial instead. The second proof in this paper shows that there exist $n^2\times n^2$ Sudoku squares with distinct entries in both diagonals (in addition to the rows, columns, and $n\times n$ sub-grids) for all $n$. I would like to thank Walter ``Wal'' Wallis for pointing out (days after I posted the proof and gave the talk) that there exists an earlier and very similar proof of the existence of such squares, due to A.D.~Keedwell~\cite{Ke}: I was unaware of \cite{Ke} at the time. I would also like to thank Papa Sissokho for correcting my terminology and making the first proof more palatable.

\section{Partial chromatic polynomial}

Sudoku puzzles are, in a discrete mathematician's world, partially colored graphs. Questions about the minimal number of clues for unique solutions etc.~boil down to questions about partial colorings of the ``Sudoku graph''. This graph consists of $n^4$ vertices, corresponding to the squares of an $n^2\times n^2$ Sudoku grid, such that any two distinct vertices in the same row, column, or sub-grid are joined by an edge. A completed Sudoku puzzle is then a proper coloring of the Sudoku graph with $n^2$ colors. 

\begin{thm}\cite{HM}
Let $G$ be a graph with $n$ vertices, and $C$ be a partial proper coloring of $t$ vertices of $G$ using exactly $\lambda_0$ colors. Define $p_{G,C}(\lambda)$ to be the number of ways $C$ can be completed to a proper $\lambda$-coloring of $G$. Then for $\lambda\geq\lambda_0$, the expression $p_{G,C}(\lambda)$ is a monic polynomial in $\lambda$ of degree $n-t$.
\end{thm}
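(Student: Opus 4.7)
The plan is to mimic the Whitney-style inclusion--exclusion construction of the classical chromatic polynomial, now adapted to respect the partial coloring $C$. I would first set up notation: let $V_c \subseteq V(G)$ denote the set of $t$ already-colored vertices, write $V' = V(G) \setminus V_c$ (so $|V'| = n - t$), and let $G' = G[V']$ be the induced subgraph on the uncolored vertices. For each $v \in V'$, define $S_v \subseteq \{1, \dots, \lambda_0\}$ to be the set of colors assigned by $C$ to the colored neighbors of $v$ in $G$.

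Next, I would observe that a completion of $C$ to a proper $\lambda$-coloring of $G$ is the same as a map $f \colon V' \to \{1, \dots, \lambda\}$ satisfying (i) $f(u) \neq f(v)$ for every edge $uv \in E(G')$, and (ii) $f(v) \notin S_v$ for every $v \in V'$. Counting such maps by inclusion--exclusion over the ``bad events'' $f(u) = f(v)$, one for each edge $uv \in E(G')$, yields
\[
p_{G,C}(\lambda) = \sum_{F \subseteq E(G')} (-1)^{|F|}\, N(F),
\]
where $N(F)$ is the number of functions $f \colon V' \to \{1, \dots, \lambda\}$ satisfying condition (ii) together with $f(u) = f(v)$ for every $uv \in F$. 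Any such $f$ must be constant on each connected component $K$ of the graph $(V', F)$, and the common value on $K$ must avoid $T_K := \bigcup_{v \in K} S_v$; so $N(F) = \prod_{K}(\lambda - |T_K|)$, a polynomial in $\lambda$ of degree $c(F)$, the number of components of $(V', F)$.

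Substituting shows $p_{G,C}(\lambda)$ is an integer polynomial in $\lambda$. For degree and monicity, I would note that $c(F) \leq n-t$ with equality precisely when every component of $(V',F)$ is a single vertex, i.e., when $F = \emptyset$. The unique top-degree contribution is therefore the $F = \emptyset$ term $\prod_{v \in V'}(\lambda - |S_v|)$, whose leading monomial is $\lambda^{n-t}$ with coefficient $1$; hence $p_{G,C}$ is monic of degree $n-t$. The one place where care is needed is in correctly applying inclusion--exclusion against condition (i) while baking (ii) into the definition of $N(F)$; after that step, the formula for $N(F)$ and the degree bookkeeping are straightforward, mirroring the standard derivation $P(G,\lambda) = \sum_F (-1)^{|F|}\lambda^{c(F)}$ for the ordinary chromatic polynomial, with each $\lambda$ shifted to $\lambda - |T_K|$ by the presence of the partial coloring.
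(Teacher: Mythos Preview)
Your argument is correct: the inclusion--exclusion over $E(G')$ is valid because condition~(ii) is built into each $N(F)$, each $T_K\subseteq\{1,\dots,\lambda_0\}$ so that $\lambda-|T_K|\ge 0$ whenever $\lambda\ge\lambda_0$, and the degree and monicity bookkeeping via $c(F)\le n-t$ with equality only at $F=\emptyset$ is airtight.

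However, the route you take is not the one in the paper. The paper adapts the \emph{partition-into-independent-sets} construction of the chromatic polynomial: it groups completions according to the underlying ``generic'' coloring (the partition of $V(G)$ into color classes), observes that a generic coloring consistent with $C$ has $\lambda_0+r$ blocks for some $0\le r\le n-t$, and that the number of ways to assign actual colors to such a partition is the falling factorial $(\lambda-\lambda_0)(\lambda-\lambda_0-1)\cdots(\lambda-\lambda_0-r+1)$. This yields
\[
p_{G,C}(\lambda)=\sum_{r=0}^{n-t} m_r(G,C)\,(\lambda-\lambda_0)^{\underline{r}},
\]
with $m_{n-t}=1$ coming from the unique partition that isolates every uncolored vertex. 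By contrast, you adapt the \emph{Whitney subgraph expansion} $P(G,\lambda)=\sum_F(-1)^{|F|}\lambda^{c(F)}$, replacing each factor $\lambda$ by the shifted count $\lambda-|T_K|$. The paper's approach expresses $p_{G,C}$ in a falling-factorial basis with manifestly nonnegative, combinatorially meaningful coefficients $m_r$; your approach gives an explicit signed subgraph formula from which one can read off, for instance, the next coefficient $-\bigl(|E(G')|+\sum_{v\in V'}|S_v|\bigr)$ of $\lambda^{n-t-1}$. Both are legitimate adaptations of classical chromatic-polynomial proofs, just of different classical proofs.
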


\begin{proof}
Let $C$ be a partial proper coloring of $t$ vertices of $G$ with exactly $\lambda_0$ colors. Call a proper coloring $C'$ of $G$ ``consistent with $C$'' if the vertices colored under $C$ keep their colors under $C'$. Also call a proper coloring $C'$ ``generic'' if it is simply a partitioning of the vertices of $G$ into independent sets (more precisely, a generic coloring is an equivalence class of colorings with the same independent sets). Now let $C'$ be any generic proper coloring of $G$ with exactly $\lambda_0$ independent sets. If $C'$ is consistent with $C$, then there is only 1 way the colors of $C'$ can be specified; the larger independent sets  in $C'$ have to retain the colors of the smaller ones in $C$. Next, if a generic $C'$ is to be consistent with $C$ and have $\lambda_0+1$ independent sets, then there are $(\lambda-\lambda_0)$ ways of specifying the colors of $C'$: for the $\lambda_0$ sets that extend those in $C$, we have no choice but to respect the colors dictated by $C$. On the other hand, the extra independent set does not intersect $C$, so we can use any of the remaining $(\lambda-\lambda_0)$ colors. We continue the argument for all generic proper colorings with exactly $\lambda_0+r$ independent sets, where $0\leq r\leq n-t$. In short, we have 
\[ p_{\small G,C}(\lambda)=\sum_{r=0}^{n-t}m_r(G,C)(\lambda-\lambda_0)\cdots (\lambda-\lambda_0-r+1).\]
Here $m_r(G,C)$ is the number of generic proper colorings $C'$ of $G$ that are consistent with $C$ and have exactly $\lambda_0+r$ independent sets, and $(\lambda-\lambda_0)\cdots (\lambda-\lambda_0-r+1)$ is the number of ways the colors of such $C'$ can be specified. The $r$-th term of the sum is a polynomial of degree $r$, and the $(n-t)$-th term is monic, because there is only one generic $C'$ that adds $n-t$ independent sets to $C$. Namely, each vertex outside $C$ is a set by itself.
\end{proof}

\section{Diagonally distinct Sudoku squares}

The existence of $n^2\times n^2$ Sudoku squares for any positive integer $n$ is a well-known fact (see \cite{HM} for a proof). We will show that it is moreover possible to construct $n^2\times n^2$ Sudoku squares with distinct entries on each of the two diagonals for any $n$. A similar proof was given earlier, and unknown to the author at the time of e-publication of the first version of this paper, by Keedwell~\cite{Ke}. Michalowski et al.~\cite{MKC} and Bailey et al.~\cite{B} give  some motivating real-life examples for variations of Sudoku puzzles and other gerechte designs. 
\begin{thm}
There exist $n^2\times n^2$ Sudoku squares with distinct entries in the two diagonals, in addition to distinct entries in each row, column, and $n\times n$ sub-grid.
\end{thm}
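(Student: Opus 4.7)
The plan is to exhibit an explicit $n^2 \times n^2$ Sudoku square with distinct entries on both diagonals, produced by a $\mathbb{Z}_n$-linear formula. Identify $\{0, 1, \ldots, n^2 - 1\}$ with $\mathbb{Z}_n \times \mathbb{Z}_n$ via $x \leftrightarrow (\lfloor x/n \rfloor, x \bmod n)$, and decompose the row and column indices as $i = np + q$, $j = nr + s$ with $p, q, r, s \in \{0, 1, \ldots, n-1\}$. I would place at position $(i, j)$ the pair $(f_1, f_2)$, where $f_1$ and $f_2$ are fixed integer-linear combinations of $p, q, r, s$ reduced modulo $n$; the data is captured by a $2 \times 4$ coefficient matrix $M$ over $\mathbb{Z}$ whose columns correspond to $p, q, r, s$.

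First I would translate each combinatorial constraint into invertibility mod $n$ of a specific $2 \times 2$ submatrix of $M$. The row condition (fix $p, q$, vary $r, s$) asks the $(r, s)$-minor of $M$ to be a unit; the column condition asks the same of the $(p, q)$-minor; the subgrid condition (fix $p, r$, vary $q, s$) asks the same of the $(q, s)$-minor. On the main diagonal one has $p = r$ and $q = s$, so the entries depend on $(p, q)$ through the $2 \times 2$ matrix obtained from $M$ by adding the $p$-column to the $r$-column and the $q$-column to the $s$-column, and diagonal distinctness is the invertibility of this sum. The anti-diagonal condition $r = n - 1 - p$, $s = n - 1 - q$ yields an analogous $2 \times 2$ matrix formed from the corresponding column differences, whose invertibility is the anti-diagonal condition.

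The theorem thus reduces to exhibiting a single integer matrix $M$ whose five relevant $2 \times 2$ determinants are all $\pm 1$, so that each is a unit modulo every $n$ automatically. I would propose $f_1 = p + q + s$ and $f_2 = q + r$, which one can check puts all five determinants in $\{\pm 1\}$. The main obstacle is the tension between the subgrid condition and the two diagonal conditions: the symmetric choice $f_1 = q + r$, $f_2 = p + s$ handles the three Sudoku conditions cleanly but forces the main-diagonal minor to vanish. So just enough asymmetry must be introduced to separate the diagonal coordinates without spoiling any of the other minors; the candidate $M$ above is one such choice, after which the proof reduces to a short verification of five small determinants and the observation that the resulting grid entry $n f_1 + f_2$ ranges over $\{0, 1, \ldots, n^2 - 1\}$ exactly when $(f_1, f_2)$ ranges over $\mathbb{Z}_n \times \mathbb{Z}_n$.
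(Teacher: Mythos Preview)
Your proposal is correct and belongs to the same family of constructions as the paper's: both place at position $(np+q,\,nr+s)$ the pair $(f_1,f_2)\in\mathbb{Z}_n^2$ given by fixed $\mathbb{Z}$-linear forms in $p,q,r,s$, and then verify the five injectivity conditions (row, column, block, main diagonal, anti-diagonal). The paper describes the construction concretely as iterated cyclic shifts of rows and of entries within rows, arriving at the formula $(f_1,f_2)=(q-p+r,\;p+s)$ in your coordinates, and checks each of the five cases by direct modular arithmetic. You instead encode the construction by a $2\times 4$ integer matrix and observe that each constraint is exactly the invertibility modulo $n$ of a particular $2\times 2$ combination of its columns; demanding these five determinants to lie in $\{\pm 1\}$ then yields a construction valid for every $n$ at once. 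Your specific choice $(f_1,f_2)=(p+q+s,\;q+r)$ is different from the paper's, but both satisfy the same five determinant conditions (indeed the paper's matrix has the five relevant determinants equal to $1,-1,1,-1,1$). So the underlying idea is the same; what your packaging buys is a uniform explanation of why a single integer matrix works for all $n$, and a clean search criterion for such matrices, whereas the paper's presentation buys a concrete, visually motivated description via block shifts.
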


\begin{proof}
Notation: the $(i,j)$-{\it block} will be the $n\times n$ sub-grid placed according to matrix-entry enumeration convention inside the full grid ($i$th from the top and $j$th from the left). We will also enumerate entries in any block by the row and column numbers in the block; thus, the $(r,c)$-entry of the $(i,j)$-block will be the $(r+(i-1)n,c+(j-1)n)$-entry of the complete grid. When writing indices, we will always choose the least positive residue modulo~$n$ (denoted by $[x]$ for any integer~$x$). As a result, all variables $i,j,r,c,[x]$ will have values in the set $\{ 1,\dots,n\}$.

Let the symbols $a(r,c)$, with $1\leq r,c\leq n$, denote the $n^2$ integers from 1 to $n^2$ in some order. We place these distinct integers in the upper left $n\times n$ block of the grid, now called the $(1,1)$-block, such that $a(r,c)$ is in row~$r$ and column~$c$:
\[    \begin{array}{||c|c|c||}\hline\hline
{a(1,1)} & {a(1,2)} & {a(1,3)} \\ \hline
{a(2,1)} & {a(2,2)} & {a(2,3)}\\ \hline 
 {a(3,1)} & {a(3,2)} & {a(3,3)}\\
\hline\hline
\end{array}
=                 
\begin{array}{||c|c|c||}\hline\hline
{1} & {2} & {3} \\ \hline
{4} & {5} & {6}\\ \hline 
 {7} & {8} & {9}\\
\hline\hline
\end{array}\]
In order to create the $(1,2)$-block, we simply move the rows of the $(1,1)$-block up in a cyclic fashion:
\[    \begin{array}{||c|c|c||}\hline\hline
{a(2,1)} & {a(2,2)} & {a(2,3)}\\ \hline
{a(3,1)} & {a(3,2)} & {a(3,3)}\\ \hline
{a(1,1)} & {a(1,2)} & {a(1,3)} \\ \hline
 
\hline
\end{array}
=                 
\begin{array}{||c|c|c||}\hline\hline
{4} & {5} & {6}\\ \hline
{7} & {8} & {9}\\ \hline
{1} & {2} & {3} \\ \hline
\hline
\end{array}\]
We continue this permutation of rows inside each new block until we finish the first row of blocks. As for the $(2,1)$-block, we advance the rows inside the $(1,1)$-block one step down cyclically, and also move the entries in each row (inside the block) one step backward:
\[    \begin{array}{||c|c|c||}\hline\hline
{a(3,2)}&{a(3,3)} & {a(3,1)}    \\ \hline
{a(1,2)}&{a(1,3)} & {a(1,1)}   \\ \hline
 {a(2,2)}&{a(2,3)} & {a(2,1)}   \\ \hline 
\hline
\end{array}
=                 
\begin{array}{||c|c|c||}\hline\hline
   {8}& {9} &{7}\\ \hline
  {2}&{3} &{1}   \\ \hline
{5}&{6}& {4}    \\ \hline 
\hline
\end{array}\]
We complete the second row of blocks similar to the first, only by permuting whole rows in the $(2,1)$-block upward, without making any changes to the rows internally, and repeat the procedure until all rows of blocks are exhausted. The $4\times 4$, $9\times 9$, and $16\times 16$ Sudoku squares with distinct diagonal entries constructed by this method are given below:
\[ \begin{array}{||c|c||c|c||}\hline\hline
1 & 2 & 3 &4 \\ \hline
3 & 4 & 1 & 2 \\ \hline \hline
4 & 3 & 2 & 1 \\ \hline
2 & 1 & 4 & 3 \\
\hline\hline
\end{array}\]

\[ \begin{array}{||c|c|c||c|c|c||c|c|c||}\hline\hline
 1 & 2 &3 & 4 & 5 & 6 & 7 & 8 & 9\\ \hline
4 & 5 & 6 & 7 & 8 & 9 & 1 & 2 &3 \\ \hline 
 7 & 8 & 9 & 1 & 2 &3 & 4 & 5 & 6   \\ \hline
\hline
8 & 9 & 7 & 2 &3 & 1 & 5  & 6  &4\\ \hline
2 &3 & 1 & 5 & 6 & 4 & 8 & 9& 7\\ \hline
5 & 6 & 4 & 8 & 9 & 7 & 2 & 3 &1\\ \hline 
\hline
6 & 4 & 5 & 9 & 7 & 8 & 3 & 1 & 2 \\ \hline 
 9 &7 & 8 & 3 & 1 &2 & 6 & 4  & 5  \\ \hline
 3 & 1 &2 & 6 & 4 & 5 & 9 & 7 & 8\\ \hline
\hline
\end{array}\]

\[ \begin{array}{||c|c|c|c||c|c|c|c||c|c|c|c||c|c|c|c||}\hline\hline
 1 & 2 & 3 & 4 & 5 & 6 & 7 & 8 & 9 & \!10\!& \!11\! & \!12\! &\! 13\!& \!14\! & \!15\! & \!16\! \\ \hline
 5 & 6 & 7 & 8  & 9 & \!10\! & \!11\! & \!12\! & \! 13\! & \!14\! & \!15\! & \!16\!& 1 & 2 & 3 & 4 \\ \hline 
9 & \!10\! & \!11\! & \!12\! & \! 13\! & \!14\! & \!15\! & \!16\! & 1 & 2 & 3 & 4 & 5 & 6 & 7 & 8 \\ \hline
\! 13\! & \!14\! & \!15\! & \!16\! & 1 & 2 & 3 & 4 & 5 & 6 & 7 & 8 & 9 & \!10\! & \!11\! & \!12\!\\ \hline
\hline
 \!14\! & \!15\! & \!16\! & \! 13\!  & 2 & 3 & 4 & 1 & 6 & 7 & 8 & 5 & \!10\!& \!11\! & \!12\! & 9 \\ \hline
2 & 3 & 4 & 1 &  6 & 7 & 8 & 5 & \!10\!& \!11\! & \!12\! & 9 & \!14\! & \!15\! & \!16\! & \! 13\! \\ \hline
6 & 7 & 8 & 5  & \!10\!& \!11\! & \!12\! & 9 & \!14\! & \!15\! & \!16\! & \! 13\! & 2 & 3 & 4 & 1 \\ \hline 
\!10\!& \!11\! & \!12\! & 9 & \!14\! & \!15\! & \!16\! & \! 13\! & 2 & 3 & 4 & 1 & 6 & 7 & 8 & 5 \\ \hline
\hline
\!11\! & \!12\! & 9 & \!10\!&  \!15\! & \!16\! & \! 13\! & \!14\! & 3 & 4 & 1 & 2 & 7  & 8 & 5 & 6 \\ \hline
\!15\! & \!16\! & \! 13\! & \!14\!  & 3 & 4 & 1 & 2  &  7  & 8 & 5 & 6 & \!11\! & \!12\! & 9 & \!10\! \\ \hline 
3 & 4 & 1 & 2 &   7  & 8 & 5 & 6 & \!11\! & \!12\! & 9 & \!10\! & \!15\! & \!16\! & \! 13\! & \!14\! \\ \hline
7  & 8 & 5 & 6 & \!11\! & \!12\! & 9 & \!10\! &\!15\! & \!16\! & \! 13\! & \!14\!& 3 & 4 & 1 & 2 \\ \hline
\hline
8 & 5 & 6 & 7 & \!12\! & 9 & \!10\!& \!11\!  & \!16\!& \! 13\!& \!14\! & \!15\!& 4 &1 & 2 & 3\\ \hline 
\!12\! & 9 & \!10\!& \!11\!  & \!16\!& \! 13\!& \!14\! & \!15\! & 4 &1 & 2 & 3 &  8 & 5 & 6 & 7  \\ \hline
\!16\!& \! 13\!& \!14\! & \!15\! & 4 &1 & 2 & 3 &  8 & 5 & 6 & 7 & \!12\! & 9 & \!10\!& \!11\! \\ \hline
4 &1 & 2 & 3 &  8 & 5 & 6 & 7 &\!12\! & 9 & \!10\!& \!11\! & \!16\!& \! 13\!& \!14\! & \!15\!  \\ \hline
\hline
\end{array}\]

We now present the full proof of existence. Let us place the integer 
\[ a\, (\, [\, r-(i-1)+(j-1)\, ]\, ,[\, c+(i-1)\, ]\,)=a\, (\, [\, r-i+j]\, ,[\, c+i-1]\, )\]
in the $(r,c)$-entry of the $(i,j)$-block, and use the prime notation to distinguish another entry. Distinct entries in the same row of the full grid (where $i=i'$ and $r=r'$, but $j\neq j'$ or $c\neq c'$) are not equal: if they were, then we would have
\bea &&[r-i+j]=[r-i+j'] \;\;\mbox{and}\;\; [c+i-1]= [c'+i-1]     \nn\\
&\Rightarrow& j=j' \;\;\mbox{and}\;\; c=c'.
\nn\eea
Similarly, distinct entries in the same column of the full matrix (where $j=j'$ and $c=c'$, but $i\neq i'$ or $r\neq r'$) cannot be equal:
\bea &&[r-i+j]=[r'-i'+j] \;\;\mbox{and}\;\; [c+i-1]= [c+i'-1]     \nn\\
&\Rightarrow& i=i'\;\;\mbox{and}\;\; r=r'.
\nn\eea
Two distinct entries in the same block (where $i=i'$ and $j=j'$, but $r\neq r'$ or $c\neq c'$) are not equal:
\bea &&[r-i+j]=[r'-i+j] \;\;\mbox{and}\;\; [c+i-1]= [c'+i-1]     \nn\\
&\Rightarrow& r=r'\;\;\mbox{and}\;\; c=c' .
\nn\eea
Two distinct entries on the main diagonal (where $i=j$, $i'=j'$, $r=c$, and $r'=c'$, but $i\neq i'$ or $r\neq r'$) are not equal:
\bea &&[r-i+i]=[r'-i'+i'] \;\;\mbox{and}\;\; [r+i-1]= [r'+i'-1]     \nn\\
&\Rightarrow&  r=r' \;\;\mbox{and}\;\; i=i'.
\nn\eea
Finally, two distinct entries on the secondary diagonal (where $i+j=i'+j'=n+1$, $r+c=r'+c'=n+1$, but $i\neq i'$ or $r\neq r'$) are not equal:
\bea &&[r-i+(n+1)-i]=[r'-i'+(n+1)-i'] \nn\\
&&\mbox{and}\;\; [(n+1)-r+i-1]= [(n+1)-r'+i'-1]     \nn\\
&\Rightarrow& [r-2i]=[r'-2i'] \;\;\mbox{and}\;\; [-r+i]=[-r'+i']     \nn\\
&\Rightarrow&  r=r' \;\;\mbox{and}\;\; i=i' .
\nn\eea
\end{proof}

Calculations of the symmetries, the number of essentially different squares, the minimum number of entries in a puzzle to assure a unique solution, the asymptotic values of related  expressions, and the partial or full chromatic polynomials for Sudoku graphs of rank~$n$, are mentioned in~\cite{HM} in relation to standard Sudoku squares. Similar calculations would certainly be interesting for the diagonally distinct $n^2\times n^2$ Sudoku squares.


\end{document}